\definecolor{hsr}{HTML}{D81B60}
\newtheorem{theorem}{Theorem}[section]
\newtheorem*{theorem*}{Theorem}
\newtheorem{lemma}[theorem]{Lemma}
\newtheorem*{lemma*}{Lemma}
\newtheorem*{proposition*}{Proposition}
\newtheorem{corollary}[theorem]{Corollary}
\theoremstyle{definition}
\newtheorem{definition}[theorem]{Definition}
\theoremstyle{remark}
\newtheorem{remark}[theorem]{Remark}
\newcommand{\Z}{\mathbf{Z}}
\title{Anagrammatic quotients of free groups}
\author{Eric Stubley}
\date{\today}
\begin{document}
\maketitle
\begin{abstract}
We determine the structure of the quotient of the free group on 26 generators by English language anagrams.
This group admits a surprisingly simple presentation as a quotient of the free group by 301 of the possible 325 commutators of pairs of generators; all of the 24 missing commutators involve at least one of the letters $j, q, x, z$.
We describe the algorithm which can be used to determine this group given any dictionary, and provide examples from the SOWPODS scrabble dictionary witnessing the 301 commutators found.
\end{abstract}
\tableofcontents


\section{Introduction}

In this article we study the structure of the group
\[
A = \langle a, b, c, \ldots, z \, | \, w_{1} = w_{2} \text{ for all pairs $w_{i}$ which are English language anagrams}\rangle.
\]
This work was inspired by the classic article \cite{homophonic} by Mestre--Schoof--Washington--Zagier determining the structure of the homophonic group $H$.
The group $H$ has a similar definition to $A$ except anagrams are replaced by homophones.
The main result of \cite{homophonic} is that $H$ is trivial (in both English and French!); moreover it can reasonably be said that $H$ is independent of the dictionary chosen, in that the words witnessing the triviality of each generator should belong to anything calling itself an English language dictionary.
A similar study of homophonic groups for German, Korean, and Turkish was carried out in \cite{homophonic_2}.

The group $A$ is not trivial, and is not independent of the dictionary chosen.
In the particular case of the SOWPODS scrabble dictionary \cite{sowpods}, we prove the following.

\begin{theorem}\label{thm:intro}
The group $A$ with respect to the SOWPODS scrabble dictionary is the quotient of the free group on the 26 generators $a$, $b$, \ldots, $z$, subject to the relations $[\alpha, \beta] = 1$ for each pair of generators except the following 24:
\begin{itemize}
\item the 6 commutators of each pair of $j, q, x, z$,
\item the 5 commutators of $j$ with $f, k, l, w, y$,
\item the 6 commutators of $q$ with $b, f, g, k, w, y$,
\item the 3 commutators of $x$ with $f, k, v$,
\item the 4 commutators of $z$ with $f, k, v, w$.
\end{itemize}
\end{theorem}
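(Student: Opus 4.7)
The plan starts from the observation that any anagram pair $(w_1, w_2)$ has matching abelianization, so $w_1 w_2^{-1}$ lies in the commutator subgroup of the free group $F$ on the 26 letters. Thus $A$ sits between $F$ and $\Z^{26}$, and the theorem asserts that $A$ is isomorphic to the right-angled Artin group (RAAG) $G$ obtained by imposing the 301 listed commutation relations on $F$. To prove $A \cong G$ I would establish two containments of normal closures in $F$: first, that each of the 301 commutators is a consequence of SOWPODS anagram relations, and second, that every SOWPODS anagram relation is a consequence of the 301 commutators.

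For the first containment, I would produce, for each of the 301 commutators $[\alpha,\beta]$, an explicit SOWPODS anagram pair witnessing it. The cleanest witnesses are pairs of two-letter words, such as \emph{no}/\emph{on} witnessing $[n,o]=1$, but many commuting pairs of letters admit no such short dictionary witness and one must appeal to longer anagrams. Given an anagram pair $(u,v)$ and a provisional set $S$ of already-established commuting pairs, the element $uv^{-1}$ descends in the RAAG with relations from $S$ to a word that, when nontrivial, constrains which additional commutators must be forced in $A$. I would process anagrams greedily in roughly increasing length, augmenting $S$ as forced and recording one witness pair per newly-covered commutator, until all 301 are accounted for.

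For the second containment, I would verify algorithmically that every SOWPODS anagram pair $(u,v)$ satisfies $u = v$ in the RAAG $G$. This reduces to a decidable and efficient check, since in a RAAG two positive words represent the same element if and only if one is obtained from the other by a sequence of adjacent transpositions of commuting generators, testable via comparison of (for instance) Foata normal forms in the underlying trace monoid. The main obstacle is computational rather than conceptual: SOWPODS contains hundreds of thousands of words and a correspondingly large collection of anagram classes to be processed. The most delicate aspect is the claim that no SOWPODS anagram class forces any of the 24 forbidden commutators, whose plausibility rests on the scarcity of dictionary words mixing the rare letters $j, q, x, z$ with the particular letters excluded from their respective commutation classes.
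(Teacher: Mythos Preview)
Your proposal is correct and takes essentially the same approach as the paper: both arguments reduce to establishing the two containments between the normal closure $N$ of the 301 commutators and $R(\text{SOWPODS})$, the first by exhibiting explicit witness anagram pairs found via an iterative process that uses already-established commutators to simplify further relations, and the second by a finite computational check that every dictionary anagram relation already holds modulo $N$. The only differences are in implementation: the paper's iteration is organized around a specific reduction lemma (delete any letter that commutes with all other letters present in the word) and a search for ``admissible pairs'' differing by a single adjacent transposition, while you phrase the reverse containment as a direct word-problem check in the RAAG via trace-monoid normal forms; either device suffices for this finite verification.
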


Notably the relations needed are no more complicated than commutators of the generating letters.
This means that $A$ (with this dictionary) is a right-angled Artin group!

Some commutator relation are directly witnessed by anagram pairs.
For example the anagram relation $able = bale$ is nothing other than the commutator relation $[a, b] = 1$.
However, not all of the commutator relations in $A$ that appear in \cref{thm:intro} are directly obtainable from anagrams in this way.
In \cref{sec:algebra} we describe how some anagram relations can be simplified in the presence of commutators, in order that more commutators may be exhibited as coming from anagram relations.
\Cref{sec:strategy} describes the algorithm used to search the dictionary for commutators.
For the specific case of the SOWPODS scrabble dictionary we discuss our findings in \cref{sec:dictionary}.
As it turns out the author has a surprising personal connection to the history of the study of the group $A$, which is recounted in \cref{sec:history}.
\Cref{app:data} collects anagram pairs in the SOWPODS dictionary which exhibit the commutators in $A(\text{SOWPODS})$.

To avoid confusion, we reserve the use of lower case roman characters $a, b, c, \ldots, z$ in mathematical statements for the generators of our anagrammatic groups.
We will use greek characters ($\alpha, \beta, \ldots$), upper case roman characters ($A, B, \ldots$), or roman characters with subscripts ($a_{1}, b_{2}$, \ldots) for variables.

\section{Algebraic preliminaries}\label{sec:algebra}

We will work throughout this article with the free group $F_{26}$ with generators $a, b, \ldots, z$.

\begin{definition}
If we are given a word $W = a_{1}\ldots a_{n} \in F_{26}$ and $\sigma \in S_{n}$, we define $W_{\sigma}$ to be the word
\[
W_{\sigma} = a_{\sigma(1)}\ldots a_{\sigma(n)}.
\]
For two words $W, W' \in F_{26}$ we say that $W$ and $W'$ are anagrams of one another if $W' = W_{\sigma}$ for some permutation $\sigma$.
\end{definition}

\begin{definition}
Given a dictionary $D$ which consists of words in the generators $a, b, \ldots, z$ we define to be $R(D)$ the normal subgroup generated by all elements $W_{1}W_{2}^{-1}$ where $W_{i} \in D$ and $W_{1}, W_{2}$ are anagrams of one another.
We say that the relation $W_{1} = W_{2}$ is in $R(D)$ if $W_{1}W_{2}^{-1} \in R(D)$.
We define the anagram group $A(D)$ for the dictionary $D$ to be the quotient $A(D) = F_{26}/R(D)$.
\end{definition}

The following lemma shows that the ``smallest'' we can expect $A(D)$ to be is $\Z^{26}$.
This is perhaps clear if we think about the fact that two words are anagrams if and only if they are written with the same multiset of letters, which is precisely what $\Z^{26}$ counts.
Nonetheless we provide an explicit proof by computations with commutators to get used to thinking about commutators in an anagrammatic context.

\begin{lemma}
For any dictionary $D$, the abelianization map $F_{26} \to \Z^{26}$ factors through $A(D)$.
\end{lemma}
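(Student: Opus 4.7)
The plan is to show $R(D) \subseteq [F_{26}, F_{26}]$, since $[F_{26}, F_{26}]$ is exactly the kernel of the abelianization map $F_{26} \to \Z^{26}$. Because $[F_{26}, F_{26}]$ is a normal subgroup and $R(D)$ is the \emph{normal} closure of elements $W_{1}W_{2}^{-1}$ with $W_{1}, W_{2}$ anagrams, it suffices to verify that each such generator lies in $[F_{26}, F_{26}]$.

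First I would reduce to the case of an adjacent transposition. Writing $W_{1} = a_{1}\cdots a_{n}$ and $W_{2} = (W_{1})_{\sigma}$, I decompose $\sigma = \tau_{k}\cdots \tau_{1}$ as a product of adjacent transpositions and set $V_{0} = W_{1}$, $V_{j} = (W_{1})_{\tau_{j}\cdots\tau_{1}}$, so that $V_{k} = W_{2}$. Then the telescoping identity
\[
W_{1}W_{2}^{-1} = (V_{0}V_{1}^{-1})(V_{1}V_{2}^{-1})\cdots (V_{k-1}V_{k}^{-1})
\]
reduces the problem to showing $V_{j-1}V_{j}^{-1} \in [F_{26}, F_{26}]$ for each $j$, since $[F_{26}, F_{26}]$ is closed under products.

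The core computation is then the single-adjacent-transposition case: if $\tau = (i, i+1)$ and $V = b_{1}\cdots b_{n}$, a direct expansion gives
\[
VV_{\tau}^{-1} = (b_{1}\cdots b_{i-1})\,[b_{i}, b_{i+1}]\,(b_{1}\cdots b_{i-1})^{-1},
\]
which is a conjugate of the commutator $[b_{i}, b_{i+1}]$ and hence lies in $[F_{26}, F_{26}]$. Combining this with the telescoping decomposition above completes the argument.

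The argument has no real obstacle; the only thing to be careful about is the bookkeeping in the single-transposition identity and the observation that writing $\sigma$ as a product of adjacent transpositions suffices because $S_{n}$ is generated by them. Conceptually, the proof matches the author's intent of ``getting used to commutators in an anagrammatic context'': every anagram relation is literally a product of conjugates of generator commutators.
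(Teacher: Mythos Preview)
Your argument is correct and follows the same overall plan as the paper: reduce to $R(D)\subseteq [F_{26},F_{26}]$, telescope over a transposition decomposition of $\sigma$, and handle a single transposition by an explicit commutator identity. The difference is that the paper uses \emph{arbitrary} transpositions, writing $W=s_{1}\alpha s_{2}\beta s_{3}$ and computing $WW_{\tau}^{-1}=s_{1}[\alpha,s_{2}\beta][s_{2},\beta]s_{1}^{-1}$, whereas you restrict to \emph{adjacent} transpositions so that the middle block $s_{2}$ is empty and the identity collapses to a single conjugated commutator $(b_{1}\cdots b_{i-1})[b_{i},b_{i+1}](b_{1}\cdots b_{i-1})^{-1}$. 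That is a genuine, if small, simplification of the core computation.

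One bookkeeping caveat, since you flagged it yourself: with the paper's convention $W_{\sigma}=a_{\sigma(1)}\cdots a_{\sigma(n)}$ one has $(W_{\rho})_{\tau}=W_{\rho\tau}$, so to make each step $V_{j-1}\mapsto V_{j}$ an \emph{adjacent} swap you should write $\sigma=\tau_{1}\cdots\tau_{k}$ and set $V_{j}=(W_{1})_{\tau_{1}\cdots\tau_{j}}$. With your ordering $\sigma=\tau_{k}\cdots\tau_{1}$ and $V_{j}=(W_{1})_{\tau_{j}\cdots\tau_{1}}$, the words $V_{j-1}$ and $V_{j}$ differ by the conjugate transposition $(\tau_{j-1}\cdots\tau_{1})^{-1}\tau_{j}(\tau_{j-1}\cdots\tau_{1})$, which is a transposition but not in general adjacent. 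The fix is purely notational and does not affect the substance of your proof.
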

\begin{proof}
To show that $A(D)$ is intermediate between $F_{26}$ and $\Z^{26}$, it suffices to show that $R(D) \subseteq [F_{26}, F_{26}]$.
Since the commutator subgroup is normal, it suffices to show that any anagram relation $W = W'$ of $R(D)$ is in $[F_{26}, F_{26}]$.

Suppose that $\sigma$ is the permutation with $W' = W_{\sigma}$.
If we write $\sigma = \prod_{i=1}^{k} \tau_{i}$ as a product of transpositions, we see that
\[
WW_{\sigma}^{-1} = (WW_{\tau_{1}}^{-1})(W_{\tau_{1}}W_{\tau_{1}\tau_{2}}^{-1}) \ldots (W_{\tau_{1} \ldots \tau_{k-1}}W_{\sigma}^{-1}).
\]
From this it suffices to show that $WW_{\tau}^{-1}$ is in the commutator subgroup of $F_{26}$ for any transposition $\tau$.
Write
\begin{align*}
W & = s_{1} \alpha s_{2} \beta s_{3} & W_{\tau} & = s_{1} \beta s_{2} \alpha s_{3}.
\end{align*}
Then we have that
\begin{align*}
WW_{\tau}^{-1} 	& = s_{1} \alpha s_{2} \beta s_{3} s_{3}^{-1} \alpha^{-1} s_{2}^{-1} \beta^{-1} s_{1}^{-1} \\
				& = s_{1}(\alpha s_{2} \beta \alpha^{-1}s_{2}^{-1} \beta^{-1}) s_{1}^{-1} \\
				& = s_{1}(\alpha s_{2} \beta \alpha^{-1} (\beta^{-1} s_{2}^{-1} s_{2} \beta) s_{2}^{-1} \beta^{-1}) s_{1}^{-1} \\
				& = s_{1}(\alpha s_{2} \beta \alpha^{-1} (s_{2}\beta)^{-1} s_{2} \beta s_{2}^{-1} \beta^{-1}) s_{1}^{-1} \\
				& = s_{1} [\alpha, s_{2}\beta][s_{2}, \beta] s_{1}^{-1}.
\end{align*}
Since the commutator subgroup is normal, this computation shows that $WW_{\tau}^{-1}$ is in $[F_{26}, F_{26}]$.
So by our previous logic we see that any anagram relation $W = W_{\sigma}$ is in $[F_{26}, F_{26}]$.
\end{proof}

The next lemma shows how we can reduce the relations in $R(D)$ to a simpler form.
The principle is to use commutators of generators that we know to be in $R(D)$ in order to remove letters from a given anagram relation.

\begin{lemma}\label{lem:reduce}
Suppose the $W_{1} = W_{2}$ is a relation in $R(D)$ where $W_{1}, W_{2}$ are anagrams, and that $\alpha$ is a character appearing in the $W_{i}$.
If we have $[\alpha, \beta] \in R(D)$ for all letters $\beta$ appearing in the $W_{i}$ and we denote by $\hat{W}$ the word $W$ with all instances of the character $\alpha$ removed, we have that the relation $\hat{W}_{1} = \hat{W}_{2}$ is in $R(D)$.
\end{lemma}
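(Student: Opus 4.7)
The plan is to exploit that, modulo $R(D)$, the letter $\alpha$ commutes with every other letter appearing in $W_1$ or $W_2$, so we may freely slide all the $\alpha$'s in each $W_i$ to (say) the leftmost position. Since $W_1$ and $W_2$ are anagrams, they contain the same multiset of letters, so in particular the same number $k$ of $\alpha$'s. Thus we will obtain congruences $W_i \equiv \alpha^{k} \hat{W}_i \pmod{R(D)}$ for $i = 1, 2$. Combined with $W_1 W_2^{-1} \in R(D)$ and the normality of $R(D)$, this will yield $\hat{W}_1 \hat{W}_2^{-1} \in R(D)$, which is what we want.

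First I would make the sliding step precise. Write $W_i = u \alpha v$ where $\alpha$ is some particular occurrence and $v$ consists of letters $\beta$ all satisfying $[\alpha, \beta] \in R(D)$. Then $\alpha v = v \alpha \cdot [\alpha, v]^{\pm 1}$ in the free group, where $[\alpha, v]$ is a product of conjugates of the commutators $[\alpha, \beta]$ by prefixes of $v$; each such factor lies in $R(D)$ by hypothesis and normality of $R(D)$. An immediate induction on the length of $v$ (or on the number of letters to the right of a chosen $\alpha$) then shows that $W_i$ can be rewritten, modulo $R(D)$, so that every $\alpha$ is at the front. Doing this for each of the $k_i$ occurrences of $\alpha$ in $W_i$ yields $W_i \equiv \alpha^{k_i} \hat{W}_i \pmod{R(D)}$.

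Second, because $W_1$ and $W_2$ are anagrams we have $k_1 = k_2 = k$. Hence
\[
W_1 W_2^{-1} \equiv \alpha^{k} \hat{W}_1 \hat{W}_2^{-1} \alpha^{-k} \pmod{R(D)}.
\]
Since the left-hand side is in $R(D)$ by hypothesis, so is $\alpha^{k} \hat{W}_1 \hat{W}_2^{-1} \alpha^{-k}$, and conjugating by $\alpha^{-k}$ (using normality of $R(D)$) gives $\hat{W}_1 \hat{W}_2^{-1} \in R(D)$, as required.

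The main obstacle is purely bookkeeping: being careful that when we slide an $\alpha$ past a letter $\beta$, the correction term is indeed a conjugate of $[\alpha, \beta]$ (rather than of some commutator involving letters not listed in the hypotheses), and that successive slides only ever involve letters already appearing in $W_i$. Both facts are clear from the identity $\alpha \beta = \beta \alpha \cdot [\alpha, \beta]$ and from the observation that removing or commuting letters in $W_i$ does not introduce any new letters, but they need to be recorded to justify the induction.
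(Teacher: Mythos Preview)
Your argument is correct and essentially identical to the paper's: both observe that $[\alpha,\beta]\in R(D)$ for all relevant $\beta$ implies $[\alpha,S]\in R(D)$ for any substring $S$, slide every $\alpha$ to one end to obtain $\alpha^{k}\hat W_{1}\hat W_{2}^{-1}\alpha^{-k}\in R(D)$, and then conjugate using normality. The only cosmetic slip is that your displayed computation $\alpha v \equiv v\alpha$ moves $\alpha$ to the \emph{right} while your plan says ``leftmost''; of course the same identity applied to $u$ instead of $v$ gives the intended direction, so this does not affect the proof.
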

\begin{proof}
If we have that $[\alpha, \beta] \in R(D)$ for each character used in the $W_{i}$, then $[\alpha, S] \in R(D)$ for any string $S$ made using those letters (and similarly for any commutator of $\alpha^{\pm1}, S^{\pm1}$).
In particular if we write $W_{1} = S_{1}\alpha S_{2}$, then we see that 
\begin{align*}
[\alpha, S_{1}] W_{1}W_{2}^{-1} 	& = \alpha S_{1} \alpha^{-1} S_{1}^{-1} S_{1} \alpha S_{2} W_{2}^{-1} \\
									& = \alpha S_{1} S_{2} W_{2}^{-1}
\end{align*}
is in $R(D)$.
Repeat this construction to move all instances of $\alpha$ to the start of $W_{1}$, and similarly move all instances of $\alpha^{-1}$ to the end of $W_{2}^{-1}$.
We'll be left with the relation $\alpha^{k} \hat{W}_{1} \hat{W}_{2}^{-1} \alpha^{-k} \in R(D)$, noting that since $W_{1}$ and $W_{2}$ are anagrams $\alpha$ will appear the same number $k$ times in each.
Since $R(D)$ is normal, we thus have that the relation $\hat{W}_{1} = \hat{W}_{2}$ is in $R(D)$, and moreover $\hat{W}_{1}$ and $\hat{W}_{2}$ are anagrams although they may not be words in the dictionary $D$.
\end{proof}

The following consequence of \cref{lem:reduce} describes how new relations can be generated once commutators have been found.

\begin{corollary}\label{cor:combine}
Suppose that we have relations $W_{1} = W_{2}$, $W_{3} = W_{4}$ in $R(D)$, and that after some applications of \cref{lem:reduce} these relations become $\hat{W}_{1} = \hat{W}_{2}$, $\hat{W}_{3} = \hat{W}_{4}$.
If $\hat{W}_{2} = \hat{W}_{3}$, then the relation $\hat{W}_{1} = \hat{W}_{4}$ is in $R(D)$.
\end{corollary}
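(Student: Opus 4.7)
The plan is to observe that $R(D)$ is by definition a (normal) subgroup of $F_{26}$, so it is closed under products and inverses; the statement should therefore reduce to a single concatenation once the hypothesis $\hat{W}_{2} = \hat{W}_{3}$ is applied in the right spot.

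First I would invoke \cref{lem:reduce} on each given relation separately, which produces $\hat{W}_{1}\hat{W}_{2}^{-1} \in R(D)$ and $\hat{W}_{3}\hat{W}_{4}^{-1} \in R(D)$. Next, since these two elements both lie in the subgroup $R(D)$, so does their product
\[
(\hat{W}_{1}\hat{W}_{2}^{-1})(\hat{W}_{3}\hat{W}_{4}^{-1}).
\]
Now I would use the hypothesis: the assumption is that $\hat{W}_{2}$ and $\hat{W}_{3}$ are literally the \emph{same} word in $F_{26}$ (not merely equivalent modulo $R(D)$), so $\hat{W}_{2}^{-1}\hat{W}_{3} = 1$ as an element of the free group and the displayed product collapses to $\hat{W}_{1}\hat{W}_{4}^{-1}$. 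Hence $\hat{W}_{1}\hat{W}_{4}^{-1} \in R(D)$, which is precisely the assertion that the relation $\hat{W}_{1} = \hat{W}_{4}$ is in $R(D)$.

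There is essentially no obstacle here beyond making sure the reader appreciates the distinction between equality in the free group and equality modulo $R(D)$; if the two middle words were only equal in $A(D)$ rather than as strings, one would still need a separate witness that $\hat{W}_{2}\hat{W}_{3}^{-1} \in R(D)$, and the collapsing step above would fail to be immediate. I would flag this briefly in the proof, perhaps with a parenthetical remark that $\hat{W}_{1}$ and $\hat{W}_{4}$ will automatically be anagrams of one another (since each is an anagram of the common middle word), so the resulting relation is of the same formal type as those generating $R(D)$.
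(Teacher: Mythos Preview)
Your proof is correct and matches the paper's own argument exactly: the paper's entire proof is the single sentence ``Immediate from multiplying $\hat{W}_{1}\hat{W}_{2}^{-1}$ with $\hat{W}_{3}\hat{W}_{4}^{-1}$,'' which is precisely the concatenation-and-collapse you spell out. Your additional remarks about the literal-versus-modulo-$R(D)$ equality and the anagram observation are helpful clarifications but go beyond what the paper bothers to say.
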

\begin{proof}
Immediate from multiplying $\hat{W}_{1}\hat{W}_{2}^{-1}$ with $\hat{W}_{3}\hat{W}_{4}^{-1}$.
\end{proof}

\section{Strategy}\label{sec:strategy}

We describe the strategy that we use in our quest to simplify the presentation of $A(D)$.
The basic idea is to iteratively look for commutator relations from our anagrams and then use those relations to reduce and combine our set of anagram relations as in \cref{lem:reduce} and \cref{cor:combine} in the hope of finding more commutators.

\begin{definition}
We say an pair of words $W_{1}, W_{2}$ is an admissible pair if they are anagrams of each other and they are of the form
\begin{align*}
W_{1} & = s_{1}\alpha \beta s_{2} & W_{2} & = s_{1} \beta \alpha s_{2}
\end{align*}
for strings $s_{1}, s_{2}$ and letters $\alpha, \beta$.
Note that if $W_{1}, W_{2}$ are an admissible pair, the relation $W_{1}W_{2}^{-1}$ is a conjugate of the commutator $[\alpha, \beta]$.
\end{definition}

\noindent\textbf{Step 1:} this step is consists of setting up the main data structure we work with. 
For each set of anagrams in $D$ having the same image $\gamma$ in $\Z^{26}$ (we refer to this image as the ``letter count'' of a word), create a complete graph with vertices the words with that letter count.
We think of the edge connecting words $W_{1}$ and $W_{2}$ as the relation $W_{1} = W_{2}$.
We call these graphs the ``anagraphs'' (a portmanteau of anagram and graph, not to be confused with \cite{anagraphs}). \\

\noindent\textbf{Step 2:} search through all the anagraphs we have for any admissible pairs and add them to a running list of known commutators. \\

\noindent\textbf{Step 3:} using the admissible pairs found in the previous step, we reduce and combine our anagraphs using \cref{lem:reduce} and \cref{cor:combine}.
Each anagraph $G_{\gamma}$ corresponds to a word count $\gamma$ in $\Z^{26}$.
If a letter appears in $\gamma$ and our list of admissible pairs tells us that that letter commutes with all others in $\gamma$, then we remove that letter from $\gamma$ and from all the vertices of $G_{\gamma}$.
We let $\gamma'$ be and $G_{\gamma'}$ be the letter count and graph obtained by performing this reduction for each letter of $\gamma$.

This may have the effect of combining two or more anagraphs as in \cref{cor:combine}, since letter counts $\gamma_{1} \neq \gamma_{2}$ may reduce such that $\gamma_{1}' = \gamma_{2}'$.
When this happens we identify the anagraphs $G_{\gamma_{1}'}$ and $G_{\gamma_{2}'}$, identifying those vertices which have reduced to the same string as in \cref{cor:combine}.
In general the reduced anagraphs may end up with several connected components; we always add in edges to ensure that each connected component is complete, which is simply ensuring the transitivity of equality for the group relations as encoded in the anagraphs. \\

\noindent\textbf{Step 4:} return to step $2$ and repeat until no new admissible pairs are found and no more reduction of anagraphs occurs. \\

\noindent\textbf{Step 5:} manually treat the remaining relations. \\

Note that this procedure is not guaranteed to produce useful results; for example with a particularly small dictionary it could be the case that no commutators are found in the second step.
With reasonable English language dictionaries fewer than 10 repetitions of the commutator finding and reduction steps are needed, and the resulting list of a few hundred words can easily be dispatched as most provide no new information.

A Sagemath \cite{sagemath} program written to carry out steps 1 through 4 of this strategy is available through the author's website and GitHub page.

\section{Dictionary specific results}\label{sec:dictionary}

We now describe the anagram group $A(D)$ in the specific case that $D$ is the SOWPODS scrabble word list \cite{sowpods}.

\begin{theorem}\label{thm:sowpods}
The group $A(\textup{SOWPODS})$ has presentation
\[
\langle a, b, \ldots, z \,|\, \textup{all commutators of a pair of generators except the 24 listed below} \rangle
\]
where the missing commutators are:
\begin{itemize}
\item the 6 commutators of each pair of $j, q, x, z$
\item the 5 commutators of $j$ with $f, k, l, w, y$
\item the 6 commutators of $q$ with $b, f, g, k, w, y$
\item the 3 commutators of $x$ with $f, k, v$
\item the 4 commutators of $z$ with $f, k, v, w$
\end{itemize}

\end{theorem}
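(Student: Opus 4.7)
The plan is to prove the theorem via two inclusions. Let $N \subset F_{26}$ denote the normal subgroup generated by the 301 commutators listed in the statement; I want to show $N = R(\textup{SOWPODS})$. Once this equality is established, the non-triviality of the 24 excluded commutators in $A(\textup{SOWPODS})$ is automatic, since in the right-angled Artin group $F_{26}/N$ a commutator $[\gamma, \delta]$ is trivial if and only if $(\gamma, \delta)$ is one of the defining pairs.

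For the inclusion $N \subseteq R(\textup{SOWPODS})$, I would exhibit for each of the 301 commutators an explicit witness drawn from the dictionary. Many arise immediately from admissible pairs in the sense of \cref{sec:strategy} (for instance $\textit{able}$ and $\textit{bale}$ give $[a,b] = 1$ on the nose); the remainder are obtained by combining several anagram relations via repeated application of \cref{lem:reduce} and \cref{cor:combine}. These witnesses are recorded in \cref{app:data}.

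For the inclusion $R(\textup{SOWPODS}) \subseteq N$, I would run the algorithm of \cref{sec:strategy} on the full SOWPODS list to completion. Each iteration enlarges a list of commutators already known to lie in $R(\textup{SOWPODS})$ and then reduces and merges the anagraphs modulo this list. Termination is guaranteed because the list of commutators is monotone and bounded by the 325 possible pairs. The content of the inclusion is then the claim that at termination the list produced is exactly the 301 commutators of the statement, and that every residual edge in every reduced anagraph represents a relation which is the identity modulo those commutators; equivalently, the residual connected components are all singletons or have all pairs of vertices equal as strings in $F_{26}/N$.

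The main obstacle is Step 5, the manual handling of the residue left after the automatic procedure halts. Concretely, one has to check two things: that no residual anagram relation hides a fresh admissible pair (which would force a 302nd commutator), and that every residual edge can be accounted for by the 301 commutators already found. Because each of the 24 excluded commutators involves one of $j, q, x, z$, the relations that could cause trouble live in the (small) sub-anagraphs containing these letters. The bookkeeping is therefore finite and tractable, but it is the most delicate part of the argument and the place where one must be most careful that the human-assisted step does not silently admit one of the 24 relations.
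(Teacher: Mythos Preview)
Your plan is essentially the paper's own: define $N$ as the normal closure of the $301$ commutators, prove $N \subseteq R(\textup{SOWPODS})$ by exhibiting witnesses (via the algorithm of \cref{sec:strategy} plus \cref{lem:reduce}/\cref{cor:combine}), and prove $R(\textup{SOWPODS}) \subseteq N$ by checking that every residual anagraph edge is implied by those commutators. One small factual correction: in the actual run the automatic loop halts with only $271$ commutators, not $301$; the remaining $30$ (all involving $j$ or $q$) are extracted manually from the $220$ surviving letter counts, so your Step~5 carries slightly more weight than you suggest.
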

\begin{proof}
Let $N$ be the normal closure of the set of commutators described in the theorem statement.
We want to show that $N = R(\text{SOWPODS})$.

To show that $N \subseteq R(\text{SOWPODS})$ we must exhibit anagram pairs from the SOWPODS dictionary which realize each of the $301 = 325 - 24$ commutators which generate $N$.
The algorithmic steps of the strategy outlined in \cref{sec:strategy} realizes $271$ of the $325$ possible commutators of generators.
All commutators not found contain at least one of the letters $j, q, x, z$.
The missing commutators are:
\begin{itemize}
\item the 24 exceptional commutators in the theorem statement,
\item all commutators involving the letter $j$ except $[j, a]$, $[j, c]$, and $[j, r]$,
\item all commutators involving the letter $q$.
\end{itemize}
There are $220$ remaining letter counts which are left after the algorithm of \cref{sec:strategy} terminates.
Of these none contain the character $x$, and only two contain the character $z$.
Those containing $z$ (quartziest = quartzites and quartzose = quatorzes) yield no new information, as they are implied by the known commutators: at this point we know that $e, i, s, t$ commute with one another which implies the first relation, and we know that $e, o, r, s, t, z$ commute which implies the second relation.
The remaining $218$ letter counts each contain either $j$ or $q$, and none contains both.
Many of these provide no new relation, for example quickest = quickset is already implied by our knowledge that $e, s, t$ commute.
Using the $271$ commutators we have already established, the other $30$ commutators which are not listed as the $24$ exceptional ones in the theorem statement are found among the remaining anagram groupings, bringing the total list of commutators found to $301 = 325 - 24$.
See \cref{app:data} for anagram pairs which realize each of these $301$ commutators.

To show that $N \supseteq R(\text{SOWPODS})$ we must show that every anagram pair from the SOWPODS dictionary is implied by the relations in $N$.
Our algorithm verifies this for most of the anagrams in the dictionary, as any anagram which reduces to the trivial word after iterations of the second and third steps of the algorithm is implied by the relations in $N$.
For the remaining $220$ letter counts left after the algorithm terminates, one verifies manually that all are implied by the commutators in $N$.
\end{proof}

\begin{remark}\label{rem:raag}
Notably the relations in $R(D)$ are generated only by commutators of generators and not any more involved relations, making $A(\text{SOWPODS})$ a right-angled Artin group.
\end{remark}
\begin{remark}\label{rem:maximal}
One consequence of the fact that $N \supseteq R(\text{SOWPODS})$ is that if $[\alpha, \beta]$ is one of the $24$ exceptional commutators which do not appear in $N$, then all SOWPODS anagrams containing both the characters $\alpha$ and $\beta$ have the same pattern of $\alpha$ and $\beta$ appearing.
So for example there is no anagram pair $W_{1}, W_{2}$ in the SOWPODS dictionary where $W_{1}$ contains a $j$ and then an $x$, while $W_{2}$ contains an $x$ and then a $j$.
So in a sense $A(\text{SOWPODS})$ is maximally abelian, in that any commutator which could possibly arise from anagrams in the SOWPODS dictionary does arise.
\end{remark}

\section{History of the anagram problem}\label{sec:history}

This work was undertaken by the author while he was a graduate student at the University of Chicago, primarily in the spring of 2017.
As it turns out, there is a long and storied history of grad students at the University of Chicago studying anagrammatic groups, stretching back at least to the 1970s.
While following up on references from the article \cite{homophonic}, the author stumbled across \cite{jimmys}.
The article \cite{jimmys} is an account of several whimsical math problems studied by a group of grad students at the University of Chicago in the 1970s.
Apparently there was at that time a tradition of grad students attempting to determine the structure of the group $A$ by hand, with generations of student work being logged in a large paper chart posted on the fourth floor of the mathematics building.
Without computer assistance, several letters had been shown to be in the center of the group, and anagrams exhibiting many of the possible commutator pairs had been filled in on the chart.

The author only learned of this surprising connection in the spring of 2020, at which point the COVID-19 pandemic was in full force.
The author has confirmed that the paper chart mentioned in \cite{jimmys} was still in existence until at the least the early 1990s.
Unfortunately, the closure of the department due to the COVID-19 pandemic and the author's subsequent graduation from the University of Chicago prevented a thorough archaeological exploration of Eckhart Hall to determine if the paper chart has since been lost.

\section{Acknowledgements}\label{sec:acknowledgements}

The author wishes to thank the UChicago crossword crew for encouraging this project, the UChicago pizza seminar for providing a forum where this research was first presented, and the Canada/USA Mathcamp 2020 class ``Grammatical group generation'' for trying to actually learn some math from this project.

\appendix
\addappheadtotoc

\section{SOWPODS anagram pairs realizing commutators}\label{app:data}
Our algorithm takes 5 iterations to terminate on the SOWPODS dictionary.
See \cref{table:rounds} for an overview of how many commutators are found at each step of the algorithm.

\begin{table}[h]
\centering
\begin{tabular}{|c|c|c|}\hline
Iteration & Number of anagraphs & Total number of commutators found \\ \hline
1 & 21640 & 123 \\ \hline
2 & 8992 & 235 \\ \hline
3 & 405 & 266 \\ \hline
4 & 226 & 271 \\ \hline
5 & 220 & 271 \\ \hline
\end{tabular}
\caption{Progress of algorithm on SOWPODS dictionary}\label{table:rounds}
\end{table}

For each of the $301$ commutator relations in our presentation of $A(\text{SOWPODS})$, \cref{table:exhibit} provides an anagram pair realizing that commutator, sorted by which iteration of the strategy they are found at and then lexicographically by $(\alpha, \beta)$.
Note that commutators from previous rounds are used in extracting a commutator from an anagram pair in round 2 and onwards.

\begin{longtable}[h]{|l|ll|ll|}
\caption{Anagrams pairs $(w_{1}, w_{2})$ exhibiting the $301$ commutator relations $[\alpha, \beta]$ in our presentation of the anagram group.}\label{table:exhibit} \\

\hline
Iteration found & $\alpha$ & $\beta$ & $w_{1}$ & $w_{2}$ \\
\hline
\endfirsthead

\hline
Iteration found & $\alpha$ & $\beta$ & $w_{1}$ & $w_{2}$ \\
\hline
\endhead

\hline
\endfoot

\hline
\endlastfoot

1 & a & b & able & bale \\
1 & a & c & acre & care \\
1 & a & d & add & dad \\
1 & a & e & tael & teal \\
1 & a & f & aft & fat \\
1 & a & g & agin & gain \\
1 & a & h & ah & ha \\
1 & a & i & chai & chia \\
1 & a & j & ajwan & jawan \\
1 & a & k & oaky & okay \\
1 & a & l & alps & laps \\
1 & a & m & am & ma \\
1 & a & n & an & na \\
1 & a & o & gaol & goal \\
1 & a & p & apt & pat \\
1 & a & r & arm & ram \\
1 & a & s & asp & sap \\
1 & a & t & eat & eta \\
1 & a & u & daud & duad \\
1 & a & v & ave & vae \\
1 & a & w & awn & wan \\
1 & a & x & coax & coxa \\
1 & a & y & aye & yae \\
1 & a & z & diazin & dizain \\
1 & b & i & carbies & caribes \\
1 & b & l & able & albe \\
1 & b & o & bool & obol \\
1 & b & r & dobra & dorba \\
1 & c & i & cion & icon \\
1 & c & n & acne & ance \\
1 & c & o & cotan & octan \\
1 & c & r & acred & arced \\
1 & c & s & panics & panisc \\
1 & c & y & scye & syce \\
1 & d & e & hide & hied \\
1 & d & g & radge & ragde \\
1 & d & i & di & id \\
1 & d & l & badly & baldy \\
1 & d & o & door & odor \\
1 & d & r & padri & pardi \\
1 & d & u & duo & udo \\
1 & e & f & fief & fife \\
1 & e & g & ego & geo \\
1 & e & h & eh & he \\
1 & e & i & lei & lie \\
1 & e & k & eek & eke \\
1 & e & l & angel & angle \\
1 & e & m & em & me \\
1 & e & n & fiend & fined \\
1 & e & o & reo & roe \\
1 & e & p & creep & crepe \\
1 & e & r & tier & tire \\
1 & e & s & cures & curse \\
1 & e & t & eta & tea \\
1 & e & u & deus & dues \\
1 & e & v & evil & veil \\
1 & e & w & ewe & wee \\
1 & e & y & ley & lye \\
1 & e & z & meze & mzee \\
1 & f & i & defi & deif \\
1 & g & i & algin & align \\
1 & g & k & gingko & ginkgo \\
1 & g & l & bugle & bulge \\
1 & g & n & sign & sing \\
1 & g & o & gore & ogre \\
1 & g & r & segreant & sergeant \\
1 & g & u & rogue & rouge \\
1 & g & y & bogy & boyg \\
1 & h & o & hom & ohm \\
1 & h & s & ahs & ash \\
1 & h & t & baht & bath \\
1 & i & k & sik & ski \\
1 & i & l & gild & glid \\
1 & i & m & aim & ami \\
1 & i & n & intro & nitro \\
1 & i & o & viola & voila \\
1 & i & p & sipe & spie \\
1 & i & r & gird & grid \\
1 & i & s & is & si \\
1 & i & t & its & tis \\
1 & i & v & waiver & wavier \\
1 & i & x & deixes & dexies \\
1 & k & o & kora & okra \\
1 & k & r & chakra & charka \\
1 & k & s & flaks & flask \\
1 & k & u & kue & uke \\
1 & k & y & skyer & syker \\
1 & l & m & alma & amla \\
1 & l & n & alnage & anlage \\
1 & l & o & clod & cold \\
1 & l & s & pulse & pusle \\
1 & l & t & tilted & titled \\
1 & l & u & luna & ulna \\
1 & l & v & delving & devling \\
1 & l & y & idly & idyl \\
1 & m & o & moit & omit \\
1 & m & s & prims & prism \\
1 & m & u & mu & um \\
1 & n & o & mono & moon \\
1 & n & r & unred & urned \\
1 & n & s & mense & mesne \\
1 & n & u & gnu & gun \\
1 & n & y & sny & syn \\
1 & o & p & opt & pot \\
1 & o & r & orc & roc \\
1 & o & s & os & so \\
1 & o & t & pinot & pinto \\
1 & o & u & flour & fluor \\
1 & o & w & tow & two \\
1 & o & y & oy & yo \\
1 & o & z & ozonic & zoonic \\
1 & p & s & cups & cusp \\
1 & p & u & pus & ups \\
1 & p & y & spying & syping \\
1 & r & t & parton & patron \\
1 & r & u & run & urn \\
1 & r & v & larva & lavra \\
1 & r & y & trye & tyre \\
1 & s & t & star & tsar \\
1 & s & u & suer & user \\
1 & s & y & busy & buys \\
1 & t & u & lotus & louts \\
1 & t & y & fluty & fluyt \\ \hline
2 & b & c & bac & cab \\
2 & b & d & bad & dab \\
2 & b & e & bed & deb \\
2 & b & g & bag & gab \\
2 & b & h & boh & hob \\
2 & b & k & bok & kob \\
2 & b & m & bombed & mobbed \\
2 & b & n & ban & nab \\
2 & b & s & abs & sab \\
2 & b & t & bat & tab \\
2 & b & u & babu & buba \\
2 & b & w & bawble & wabble \\
2 & b & y & boy & yob \\
2 & b & z & bozo & zobo \\
2 & c & d & cade & dace \\
2 & c & e & aced & ecad \\
2 & c & f & cafe & face \\
2 & c & g & corgi & orgic \\
2 & c & h & cache & chace \\
2 & c & j & carcajou & carjacou \\
2 & c & l & clay & lacy \\
2 & c & m & came & mace \\
2 & c & p & cap & pac \\
2 & c & t & cate & tace \\
2 & c & u & cur & ruc \\
2 & c & v & cive & vice \\
2 & c & w & cawk & wack \\
2 & d & f & def & fed \\
2 & d & h & dah & had \\
2 & d & k & deke & eked \\
2 & d & m & dam & mad \\
2 & d & n & and & dan \\
2 & d & p & dap & pad \\
2 & d & s & das & sad \\
2 & d & t & dart & trad \\
2 & d & v & avid & diva \\
2 & d & w & daw & wad \\
2 & d & y & dray & yard \\
2 & e & x & taxes & texas \\
2 & f & g & fig & gif \\
2 & f & k & faik & kaif \\
2 & f & l & flit & lift \\
2 & f & n & fain & naif \\
2 & f & r & arf & far \\
2 & f & s & fast & saft \\
2 & f & t & eft & tef \\
2 & f & y & oofy & yoof \\
2 & g & h & gash & hags \\
2 & g & m & gum & mug \\
2 & g & p & gape & page \\
2 & g & s & gas & sag \\
2 & g & t & gat & tag \\
2 & g & v & gave & vega \\
2 & g & w & gowan & wagon \\
2 & h & i & his & ish \\
2 & h & k & hoka & koha \\
2 & h & l & ashler & lasher \\
2 & h & m & amahs & shama \\
2 & h & n & han & nah \\
2 & h & p & hap & pah \\
2 & h & r & hare & rhea \\
2 & h & u & chout & couth \\
2 & h & w & how & who \\
2 & h & y & hay & yah \\
2 & i & u & situs & suits \\
2 & i & y & lily & yill \\
2 & j & r & jar & raj \\
2 & k & l & alko & kola \\
2 & k & m & kam & mak \\
2 & k & n & ken & nek \\
2 & k & p & keep & peek \\
2 & k & t & kat & tak \\
2 & k & v & kavass & vakass \\
2 & k & w & kawa & waka \\
2 & l & p & lap & pal \\
2 & l & r & lear & real \\
2 & l & w & awl & law \\
2 & l & x & axles & laxes \\
2 & l & z & laze & zeal \\
2 & m & n & man & nam \\
2 & m & p & map & pam \\
2 & m & r & mar & ram \\
2 & m & t & mat & tam \\
2 & m & w & mew & wem \\
2 & m & y & may & yam \\
2 & m & z & mozo & zoom \\
2 & n & p & nap & pan \\
2 & n & t & nat & tan \\
2 & n & v & nave & vane \\
2 & n & w & naw & wan \\
2 & n & z & winze & wizen \\
2 & o & v & avo & ova \\
2 & o & x & diaxon & dioxan \\
2 & p & r & par & rap \\
2 & p & t & pat & tap \\
2 & p & v & pavid & vapid \\
2 & p & w & paw & wap \\
2 & r & s & ruse & user \\
2 & r & w & raw & war \\
2 & r & z & bazar & braza \\
2 & s & v & sav & vas \\
2 & s & w & saw & was \\
2 & s & x & axes & saxe \\
2 & s & z & saz & zas \\
2 & t & v & tav & vat \\
2 & t & w & taw & wat \\
2 & t & x & axites & taxies \\
2 & t & z & azote & toaze \\
2 & u & v & uva & vau \\
2 & u & y & guy & yug \\
2 & v & y & nevey & yeven \\
2 & w & y & way & yaw \\ \hline
3 & b & f & beef & feeb \\
3 & b & p & bleep & plebe \\
3 & b & v & adverb & braved \\
3 & c & k & cheek & keech \\
3 & c & x & coexist & exotics \\
3 & d & x & desex & sexed \\
3 & d & z & dozen & zoned \\
3 & f & h & flash & halfs \\
3 & f & m & flamed & malfed \\
3 & f & o & coif & foci \\
3 & f & p & earflap & parafle \\
3 & f & u & furs & surf \\
3 & f & v & favorer & overfar \\
3 & f & w & fretsaw & wafters \\
3 & h & v & halvas & lavash \\
3 & h & x & hoaxed & oxhead \\
3 & h & z & hazmat & matzah \\
3 & i & w & sinew & swine \\
3 & i & z & sitz & zits \\
3 & m & v & mover & vomer \\
3 & m & x & exams & maxes \\
3 & n & x & exons & noxes \\
3 & p & x & expos & poxes \\
3 & p & z & spaz & zaps \\
3 & r & x & rexes & sexer \\
3 & u & w & outwash & washout \\
3 & u & x & exul & luxe \\
3 & u & z & azurines & suzerain \\
3 & v & w & advew & waved \\
3 & w & x & taxwise & waxiest \\
3 & y & z & lysozymes & zymolyses \\ \hline
4 & b & x & bruxes & exurbs \\
4 & c & z & citizen & zincite \\
4 & g & x & exerting & genetrix \\
4 & g & z & gazy & zyga \\
4 & x & y & prexy & pyrex \\ \hline
manual & a & q & aquiline & quiniela \\
manual & b & j & baju & juba \\
manual & c & q & cinque & quince \\
manual & d & j & jumared & mudejar \\
manual & d & q & derequisition & requisitioned \\
manual & e & j & adjuster & readjust \\
manual & e & q & equators & quaestor \\
manual & g & j & gju & jug \\
manual & h & j & hadji & jihad \\
manual & h & q & haiques & quashie \\
manual & i & j & ijtihads & jihadist \\
manual & i & q & uniquest & unquiets \\
manual & j & m & joram & major \\
manual & j & n & abjoints & banjoist \\
manual & j & o & journos & sojourn \\
manual & j & p & jaup & puja \\
manual & j & s & joes & sjoe \\
manual & j & t & jeats & tajes \\
manual & j & u & rejoindures & surrejoined \\
manual & j & v & jayvee & veeyjay \\
manual & l & q & liquidate & qualitied \\
manual & m & q & masque & squame \\
manual & n & q & equinity & inequity \\
manual & o & q & quote & toque \\
manual & p & q & equip & pique \\
manual & q & r & quester & request \\
manual & q & s & quakes & squeak \\
manual & q & t & quoter & torque \\
manual & q & u & maqui & umiaq \\
manual & q & v & quiverer & verquire \\

\end{longtable}

\printbibliography
\end{document}